\theoremstyle{plain}
\newtheorem{theorem}{Theorem}[section]
\newtheorem{lemma}[theorem]{Lemma}
\theoremstyle{remark}
\newtheorem{remark}[theorem]{Remark}
\numberwithin{equation}{section}
\newcommand{\R}{\mathbb{R}}
\newcommand{\Z}{\mathbb{Z}}
\newcommand{\I}{\infty}
\newcommand{\norm}[1]{\left\lVert #1\right\rVert}
\newcommand{\xLebn}[2]{\left\lVert #1 \right\rVert_{L^{#2}_x}}
\newcommand{\Lebn}[2]{\left\lVert #1 \right\rVert_{L^{#2}}}
\newcommand{\xtLebn}[3]{\left\lVert #1 \right\rVert_{L^{#2}_x L^{#3}_T}}
\newcommand{\txLebn}[3]{\left\lVert #1 \right\rVert_{L^{#2}_T L^{#3}_x}}
\newcommand{\xtLebnT}[3]{\left\lVert #1 \right\rVert_{L^{#2}_x L^{#3}_t}}
\newcommand{\txLebnT}[3]{\left\lVert #1 \right\rVert_{L^{#2}_t L^{#3}_x}}
\newcommand{\Sobn}[2]{\left\lVert #1 \right\rVert_{H^{#2}}}
\newcommand{\Jbr}[1]{\left\langle #1 \right\rangle}
\newcommand{\vertiii}[1]{{\left\vert\kern-0.25ex\left\vert\kern-0.25ex\left\vert #1 
    \right\vert\kern-0.25ex\right\vert\kern-0.25ex\right\vert}}
\def\({\left(}
\def\){\right)}
\def\<{\left\langle}
\def\>{\right\rangle}
\def\le{\leqslant}
\def\ge{\geqslant}
\def\d{{\partial}}
\def \tilde{\widetilde}
\def \l{\lambda}
\def \d{\delta}
\def \pa{\partial}
\def \a{\alpha}
\def \b{\beta}
\def \t{\theta}
\def \P{\Phi}
\def \ga{\gamma}
\newcommand{\p}{\partial}
\DeclareMathOperator{\supp}{supp}
\newcommand{\md}{\color{black}}
\newcommand{\todayd}{\the\year/\the\month/\the\day}
\theoremstyle{definition}
\begin{document}
\title[The generalized KdV type equation]
{On a class of solutions to the generalized KdV type equation}

\author[F. Linares]{Felipe Linares}
\address[]{IMPA, Estrada Dona Castorina 110, Rio de Janeiro 22460-320, RJ Brazil}
\email{linares@impa.br}
\author[H. Miyazaki]{Hayato MIYAZAKI}
\address[]{Advanced Science Course, Department of Integrated Science and Technology, National Institute of Technology, Tsuyama College, Tsuyama, Okayama, 708-8509, Japan}
\email{miyazaki@tsuyama.kosen-ac.jp}
\author[G.Ponce]{Gustavo PONCE}
\address[]{Department of Mathematics, University of California Santa Barbara, Santa Barbara, California 93106, USA}
\email{ponce@math.ucsb.edu}

\date{}

\maketitle
\vskip-5mm
\begin{abstract}
We consider the IVP associated to the generalized  KdV equation with low 
degree of non-linearity
\begin{equation*}
\pa_t u + \pa_x^3 u \pm |u|^{\a}\pa_x u = 0,\; x,t \in \R,\;\a \in (0,1).
\end{equation*}
By using an  argument similar to that introduced by Cazenave and Naumkin \cite{CaNa} we establish the local well-posedness for
a class of  data in an appropriate weighted Sobolev space. Also, we show that the solutions obtained satisfy the propagation of regularity principle proven in  \cite{ILP} in solutions of the $k$-generalized KdV equation.

\end{abstract} 

\section{Introduction}
In this work we study the initial value problem (IVP) for the  generalized Korteweg-de Vries (KdV) type equation
\begin{align}
	\left\{ 
	\begin{aligned}
	&{} \pa_t u + \pa_x^3 u \pm |u|^{\a}\pa_x u = 0,\; x,t \in \R, \;\a \in (0,1),\\
	&{} u(x,0) = u_0(x),
	\end{aligned}
	\right.  \label{gkdv} \tag{GK}
\end{align}
where  $u=u(x,t)$ is a  real-valued or complex-valued unknown function. 

The equation in \eqref{gkdv} is a lower nonlinearity version of the celebrate Korteweg-de Vries equation (KdV) \cite{KdV}
\begin{equation}
\label{01}
\pa_t u + \pa_x^3 u +u\pa_x u = 0,\; x,t \in \R,
\end{equation}
and its $k$-generalized form 
\begin{equation}
\label{02}
\pa_t u + \pa_x^3 u +u^k\pa_x u = 0,\; x,t \in \R,\;\,k\in\Z^+.
\end{equation}
The IVP and the periodic boundary value problem (pbvp) associated to the equation in \eqref{02} have been extensively studied. In fact, sharp local and global 
well-posedness, stability of special solutions and  blow-up results have been established in several publications (for a more detail account of them we refer to \cite{LP} Chapters 7-8).

Formally, real valued solutions of \eqref{gkdv} satisfy three conservation laws:
\begin{align*}
I_1(u)&=\int_{-\infty}^{\infty}u(x,t)dx,\;\;\;I_2(u)=\int_{-\infty}^{\infty}u^2(x,t)dx,\\
I_3(u)&=\int_{-\infty}^{\infty}((\partial_xu)^2\mp \frac{2}{(\alpha+1)(\alpha+2)}|u|^{\alpha+2})(x,t)dx.
\end{align*}

 Roughly speaking,  the nonlinearity in \eqref{gkdv} is non-Lipchitz in any  Sobolev space $H^s(\R) = (1-\pa_x^2)^{s/2}L^2(\R),\,s\in\R,$ or in the weighted versions  $H^s(\R)\cap L^2(\R:\Jbr{x}^rdx)$, $\,k,\,\,r\in\R$ as a consequence local well-posedness can not be established in these spaces.

Our  first goal  is to  establish the local well-posedness for the IVP \eqref{gkdv} in a class of initial data.  To present our result we first describe our motivation and the ideas behind the proofs.

In \cite{CaNa}  Cazenave and Naumkin studied the IVP associated to semi-linear Schr\"odinger equation,
\begin{equation}\label{nls}
\begin{cases}
\p_tu=i(\Delta u\pm|u|^{\a} u), \hskip15pt x\in\R^n,\;t\in\R, \hskip5pt \a>0,\\
u(x,0)=u_0(x),
\end{cases}
\end{equation}
with initial data $u_0\in H^s(\R^n)$.  For every $\a>0$ they constructed a class of initial data for which there exist  unique local solutions for the IVP \eqref{nls}.  Also, they obtained 
a class of initial data for $\alpha>\frac{2}{n}$ for which there exist global solutions that scatter.

One of the ingredients in the proofs
of their results, is the fact that solutions of the linear problem satisfy
\begin{equation}\label{lower}
\underset{x\in\R^n}{\rm Inf}\; \Jbr{x}^m\,| e^{it\Delta} u_0(x)| >0,
\end{equation}
for $t\in[0,T]$ with $T$ sufficiently small whenever the initial data satisfy
\begin{equation}\label{lower-2}
\underset{x\in\R^N}{\rm Inf}\; \Jbr{x}^m\,|u_0(x)|\ge \lambda>0.
\end{equation}
This is reached for $m=m(\a)$ and $u_0\in H^s(\R^n)$ with $s$ sufficiently large with appropriate decay. To prove the inequality \eqref{lower} the authors in \cite{CaNa}  
rely on  Taylor's power expansion to avoid applying the Sobolev embedding
since the nonlinear $|u|^{\alpha} u$ is not regular enough and it would restrict the argument to dimensions $n\ge 4$. 

In \cite{LPS}, the arguments introduced in \cite{CaNa} were modified to study the IVP  associated to the generalized derivative Schr\"odinger equation
\begin{equation*}
\partial_tu=i\partial_x^2u + \mu\,|u|^{\a}\partial_xu, \hskip10pt x,t\in\R,  \hskip5pt 0<\alpha \le 1\;\; {\rm and}\;\; |\mu|=1,
\end{equation*}
establishing local well-posedness for
a class of small data in an appropriate weighted Sobolev space.

In the case considered here, the non-linearity is non-Lipschitz. Motivated by the results in \cite{CaNa} and using the smoothing effects of Kato type \cite{Ka} we shall obtain the desired local well-posedness result
for the IVP \eqref{gkdv} for a class of data satisfying \eqref{lower-2}.

The first aim of this paper is to show the following:
\begin{theorem} \label{thm:1}
Fix $m= \left[ \frac{1}{\a} \right]+1$. Let $s \in \Z^{+}$ satisfy $s \ge 2m+4$. 
Assume $\,u_0\;$ is a complex-valued function such that  
\begin{align}
	&{}\md u_0 \in H^s(\R),\; \Jbr{x}^m u_0 \in L^{\I}(\R),\; \Jbr{x}^m \pa_x^{j+1} u_0 \in L^{2}(\R),\; j=0,1,2,3, \label{thm:14} \\
	&{}\Sobn{u_0}{s} + \Lebn{\Jbr{x}^m u_0}{\I} + {\md \sum_{j=0}^3 \Lebn{\Jbr{x}^m \pa_x^{j+1} u_0}{2}} < \d \label{thm:16}
\end{align}
for some $\d>0$ and
\begin{align}
	\inf_{x \in \R} \Jbr{x}^m |u_0(x)| =: \l >0. \label{thm:15}
\end{align}
Then there exists $T=T(\a;\d;s; \l)>0$ such that \eqref{gkdv} has a unique local solution
\begin{align}
{\md u \in C([0,T]; H^s(\R)),\quad \Jbr{x}^m \pa_x^{j+1} u \in C([0,T]; L^2(\R)),\quad j=0,1,2,3} \label{thm:12}
\end{align}
with
\begin{align}
	\Jbr{x}^m u \in C([0,T]; L^\I(\R)), \quad \pa_x^{s+1} u \in L^{\infty}(\R; L^{2}([0,T])), \label{thm:13}
\end{align}
and
\begin{align}
	\sup_{0 \le t \le T} \Lebn{\Jbr{x}^m (u(t)-u_0)}{\I} \le \frac{\l}2. \label{thm:11}
\end{align}
Moreover, the map $u_0 \mapsto u(t)$ is continuous in the following sense:
For any compact $I \subset [0,T]$, there exists a neighborhood $V$ of $u_0$ satisfying  \eqref{thm:14} and \eqref{thm:15} such that the map 
is Lipschitz continuous from $V$ {\md into the class defined by \eqref{thm:12} and \eqref{thm:13}.} 
\end{theorem}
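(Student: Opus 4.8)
The plan is to prove Theorem 1.1 by a fixed-point argument, but one adapted to the non-Lipschitz nonlinearity following the Cazenave–Naumkin strategy. The key conceptual point is that the smallness condition (1.6) together with the lower bound (1.7) allows us to keep the solution away from the zero set of $|u|$, so that on the relevant time interval the quantity $|u|^\a$ is effectively a smooth function of $u$. Concretely, if we can guarantee $\sup_{0\le t\le T}\|\langle x\rangle^m(u(t)-u_0)\|_{L^\infty}\le \lambda/2$, then $\langle x\rangle^m|u(x,t)|\ge \lambda/2>0$ pointwise, so $u(x,t)$ never vanishes and $|u|^\a=(u\bar u)^{\a/2}$ is as regular as $u$ on this set. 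This is precisely the role of (1.8): it is not merely a conclusion but the invariant that the iteration must preserve.

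**First I would** set up the function space. Let $X_T$ be the space of functions satisfying the regularity in (1.4)–(1.5), equipped with a norm combining
\begin{align*}
\|u\|_{X_T}=\sup_{0\le t\le T}\|u(t)\|_{H^s}+\sup_{0\le t\le T}\sum_{j=0}^3\|\langle x\rangle^m\pa_x^{j+1}u(t)\|_{L^2}+\sup_{0\le t\le T}\|\langle x\rangle^m u(t)\|_{L^\infty}+\|\pa_x^{s+1}u\|_{L^\infty_x L^2_T},
\end{align*}
and consider the closed ball $B_R\subset X_T$ together with the constraint (1.8). The map $\Phi$ sends $v$ to the solution $u$ of the linear inhomogeneous problem $\pa_t u+\pa_x^3 u=\mp|v|^\a\pa_x v$, $u(0)=u_0$, written via Duhamel's formula $u(t)=e^{-t\pa_x^3}u_0\mp\int_0^t e^{-(t-t')\pa_x^3}(|v|^\a\pa_x v)(t')\,dt'$. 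The core estimates are then: (i) the unitary group preserves the $H^s$ and weighted-$L^2$ norms up to the standard commutator terms $[\langle x\rangle^m,\pa_x^3]$ producing lower-order contributions controlled by the $H^s$ part; (ii) the Kato smoothing effect supplies the $\|\pa_x^{s+1}u\|_{L^\infty_x L^2_T}$ gain, which is essential because the nonlinearity contains a derivative $\pa_x v$ and we must recover that lost derivative; and (iii) the $L^\infty$-weighted bound (1.8) is propagated using the Duhamel term estimated in $L^\infty$ via the smallness $\delta$ and shortness of $T$.

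**The hard part will be** the nonlinear estimate for $\||v|^\a\pa_x v\|$ in the weighted and $H^s$ norms, because $|v|^\a$ is only Hölder-$\a$ continuous at the origin and the data have merely $H^s$ regularity. Here I would exploit the non-vanishing lower bound in a decisive way: on the region where $\langle x\rangle^m|v|\ge\lambda/2$ we may write $|v|^\a=\langle x\rangle^{-\a m}(\langle x\rangle^m|v|)^\a$ and differentiate freely, while the factor $\langle x\rangle^{-\a m}$ produces extra decay that, combined with the choice $m=[1/\a]+1$ (so that $\a m>1$), makes the weighted integrals converge and the derivatives of $|v|^\a$ up to order $s$ integrable against the weights. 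The choice $s\ge 2m+4$ is what allows an algebra/Leibniz estimate for $\pa_x^s(|v|^\a\pa_x v)$ to close, distributing up to $s+1$ derivatives and using Sobolev embedding only where it is licensed by the smoothness coming from non-vanishing. One must track that the contractive gain is linear in $T$ (or a small power of $T$) and in $\delta$, so that for $T$ small depending on $\a,\delta,s,\lambda$ the map $\Phi$ is a contraction on $B_R$ that respects (1.8).

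**Finally I would** obtain uniqueness and the Lipschitz dependence from the same contraction estimates applied to differences $u_1-u_2$: the quantity $|v_1|^\a\pa_x v_1-|v_2|^\a\pa_x v_2$ is estimated using the mean value theorem on $z\mapsto|z|^\a$, again licensed by the uniform lower bound keeping both solutions away from zero, so that the Hölder singularity never appears and one gets genuine Lipschitz bounds on the compact subinterval $I$. The continuity of the flow in the stated sense follows by the standard argument that a uniform contraction depending continuously on parameters yields continuous (here Lipschitz) dependence of the fixed point on the data within the neighborhood $V$.
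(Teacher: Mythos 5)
Your proposal follows essentially the same route as the paper: a contraction argument in a space built from the $H^s$ norm, the weighted norms $\|\langle x\rangle^m u\|_{L^\infty}$ and $\|\langle x\rangle^m\partial_x^{j+1}u\|_{L^2}$, and the Kato smoothing norm $\|\partial_x^{s+1}u\|_{L^\infty_xL^2_T}$, with the invariant $\sup_t\|\langle x\rangle^m(u(t)-u_0)\|_{L^\infty}\le\lambda/2$ keeping $u$ away from zero so that derivatives of $|u|^\alpha$ (producing factors $|u|^{\alpha-k}\lesssim\langle x\rangle^{m(k-\alpha)}$) are absorbed by the weights, and with the mean value theorem on $z\mapsto|z|^\alpha$ giving the Lipschitz difference bounds. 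This matches the paper's proof in all its essential points, including the use of $\alpha m\ge 1$ to insert an integrable weight in the top-order term handled by the smoothing effect.
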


Above we have used the following notation: $\Jbr{x} = (1+|x|^2)^{1/2},\;x \in \R$,
and for any $x \in \R$, $[x]$ denotes the greatest integer less than or equal to $x$.

\begin{remark} \label{rem:1}
The solution in Theorem \ref{thm:1} is in fact unique in the class
\[
	{\md C([0,T]; H^{2m+2}(\R)),\quad \Jbr{x}^m \pa_x^{j+1} u \in C([0,T]; L^2(\R)),\quad j=0,1,}
\]
with
\[
	\Jbr{x}^m u \in L^{\I}([0,T]; L^\I(\R)), \quad \pa_x^{2m+3} u \in L^{\I}(\R; L^{2}([0,T])),
\]
and
\[
	\sup_{0 \le t \le T} \Lebn{\Jbr{x}^m (u(t)-u_0)}{\I} \le \frac{\l}2.
\]
\end{remark}

\vskip.1in  
\begin{remark}
Inequality \eqref{thm:11} gives us 
\begin{equation}
\label{BB0}
	\frac{\l}{2}\leq  -\frac{\l}{2} +\Jbr{x}^m |u_0(x)|  \le \Jbr{x}^m |u(x,t)| \le \Jbr{x}^m |u_0(x)| + \frac{\l}{2}
	\end{equation}
for any $(x,t) \in \R  \times [0,T]$.

\end{remark}

\vskip.1in  
\begin{remark} As in \cite{CaNa} from  \eqref{thm:15} and \eqref{BB0} it follows that
\begin{equation*}
\Jbr{x}^{m-1/2} u_0\notin L^2(\R)\;\;
\;\;\;\text{and}\;\;\;\;\;\;\Jbr{x}^{m-1/2} u(t)\notin L^2(\R),\;\;\;\;\;t\in(0,T].
\end{equation*}  
\end{remark}

A typical data $u_0$ satisfying the hypotheses in Theorem \ref{thm:1} is:
\begin{equation*}
u_0(x)=\frac{2\lambda \,e^{i\theta}}{\Jbr{x}^m} + \varphi(x),\;\;\;\;\;\;\;\;\;\;\theta\in\R,
\end{equation*}
with $\,\varphi\in \mathcal{S}(\R)$ and 
\begin{equation*}
 \| \Jbr{x}^m \varphi\|_{\infty} \leq \lambda.
\end{equation*}

\vskip.1in  
\begin{remark} We observe that the IVP \eqref{gkdv} has (real) traveling wave solutions (positive, even and radially decreasing) with speed $c>0$
\begin{equation*}
\begin{aligned}
\phi_{c,\alpha}(x,t)&=\,c^{1/\alpha}\,\phi(\sqrt{c}(x-ct)),\\
\\
\phi(x)&=\Big( \frac{(\alpha+1)(\alpha+2)}{2}\,\text{sech}\Big(\frac{\alpha x}{2}\Big)\Big)^{2/\alpha}.
\end{aligned}
\end{equation*}
We observe that the data $\phi$ does not satisfy \eqref{thm:15} so the traveling wave is not in the class of solutions provided by Theorem \ref{thm:1}. 
This is similar to the situation for the log-KdV equation
\begin{equation}
\label{logkdv}
 \pa_t v + \pa_x^3 v +\pa_x( v\,\log|v|)= 0,\; x,t \in \R
\end{equation}
described in \cite{CaPe}, where the well-posedness obtained there does not include the uniqueness and continuous dependence for the gaussian traveling wave solution of \eqref{logkdv}. 

\end{remark}

Our second result is concerned with the propagation of regularity in the right hand side of the data for positive times of real solutions in the class provided by Theorem \ref{thm:1}. 
It affirms that this regularity moves with infinite speed to its left as time evolves. 

\begin{theorem} \label{thm:2}
 In addition to hypotheses \eqref{thm:14}-\eqref{thm:15}, suppose $u_0$ is real valued and there exist $\,l\in \Z^+$ and $\,x_0\in\R$ such that
\begin{align*}
	u_0\Big|_{(x_0,\infty)}\in H^{s+l}((x_0,\infty)). 
\end{align*}
Then for any $\,\epsilon'>0,\;v>0,\;R>0$ and $j=1,..., l$ 
\begin{align}
\sup_{0\leq t\leq T}\int_{x_0+\epsilon'-vt}^{\infty} \;(\partial_x^{s+j}u(x,t))^2dx<c^*=c^*(\epsilon';v;x_0;j;l;s)	 \label{thm:a15}
\end{align}
 and
\begin{align}
	\int_0^T\;\int_{x_0+\epsilon'-vt}^{x_0+R-vt}\;(\partial_x^{s+l+1}u(x,t))^2dxdt<c^{**}=c^{**}(\epsilon';R;v;x_0;j;l;s).
	\label{thm:a16}
\end{align}

\end{theorem}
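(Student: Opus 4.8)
The plan is to prove Theorem \ref{thm:2} by weighted energy estimates carried out in a frame moving to the left with an \emph{arbitrary} speed $v>0$ (reflecting the infinite speed of propagation to the left), combined with an induction on the regularity level $j$ from $1$ up to $l$. This is the scheme of \cite{ILP}, the new feature being the low, non-Lipschitz power $|u|^\alpha$; what makes it work is that the solution furnished by Theorem \ref{thm:1} never vanishes, since \eqref{BB0} gives $\Jbr{x}^m|u(x,t)|\ge \l/2$, so that $|u|^\alpha=(u^2)^{\alpha/2}$ is a smooth function of $u$ along the solution and may be differentiated freely, at the cost of negative powers $|u|^{\alpha-p}$ which the lower bound converts into the admissible polynomial weights $\Jbr{x}^{m(p-\alpha)}$.

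First I would fix a family of weights $\chi=\chi(x,t)$ of the standard ILP type: $\chi$ smooth and non-decreasing in $x$, with $\chi(x,t)=\theta(x-x_0-\epsilon'+vt)$ for a fixed non-decreasing profile $\theta$ that vanishes well to the left of the moving front $x=x_0+\epsilon'-vt$ and equals $1$ well to its right, and satisfying the usual differential relations $\chi'\ge 0$ and $|\chi''|+|\chi'''|\lesssim \chi'$, so that error terms reproduce the good term. The translation produces $\partial_t\chi=v\chi'\ge 0$, which is harmless. A second weight, localized in a moving band $x_0+\epsilon'-vt<x<x_0+R-vt$, is used for the spacetime estimate \eqref{thm:a16}.

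Next, for $v_j:=\partial_x^{s+j}u$ I would apply $\partial_x^{s+j}$ to the equation, multiply by $2v_j\chi$, and integrate in $x$. The dispersive term yields, after integrations by parts, the crucial local smoothing gain $3\int(\partial_x v_j)^2\chi'\,dx$ on the left, together with lower-order terms absorbed through $|\chi'''|\lesssim\chi'$; the leading nonlinear contribution $\mp\int|u|^\alpha\partial_x(v_j^2)\chi\,dx$ is integrated by parts into $\int v_j^2\,\partial_x(|u|^\alpha\chi)\,dx$, whose factor $\partial_x(|u|^\alpha)=\alpha|u|^{\alpha-1}\mathrm{sign}(u)\,\partial_x u$ is bounded using the lower bound and the weighted control of $\partial_x u$. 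The remaining Leibniz terms $\partial_x^k(|u|^\alpha)\,\partial_x^{s+j+1-k}u$, $k\ge 1$, carry at most $s+j$ derivatives on $u$ and negative powers $|u|^{\alpha-p}$ with $p\le k$; using \eqref{thm:14}, \eqref{thm:12}--\eqref{thm:13}, and the lower bound I would dominate them by already-controlled quantities. Integrating in $t$, the data term $\int v_j(x,0)^2\chi(x,0)\,dx$ is finite because $\chi(\cdot,0)$ is supported in $(x_0,\infty)$ and $u_0\in H^{s+l}((x_0,\infty))$ for $j\le l$; a Gronwall argument then closes \eqref{thm:a15}, and the band-localized version yields \eqref{thm:a16}. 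The base case $j=1$ uses the global Kato smoothing bound $\partial_x^{s+1}u\in L^\infty(\R;L^2([0,T]))$ from Theorem \ref{thm:1}, while the inductive step feeds the spacetime bound on $\partial_x^{s+j+1}$ obtained at level $j$ into the energy identity at level $j+1$, and shrinking $\epsilon'$ (equivalently translating the front) propagates the regularity leftward.

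The main obstacle is the bookkeeping forced by the non-smooth nonlinearity: each differentiation of $|u|^\alpha$ creates negative powers $|u|^{\alpha-p}$ that grow like $\Jbr{x}^{m(p-\alpha)}$ on the (right) support of $\chi$, and one must verify at every level $j\le l$, and for every Leibniz term, that the available weighted Sobolev information — in particular that $\Jbr{x}^m\partial_x^{j+1}u\in L^2$ only for $j=0,1,2,3$, combined with Sobolev embedding and the smoothing gain — always dominates these factors so that the estimate genuinely closes. A secondary point is to justify the energy computations rigorously despite the limited regularity, which I would handle by a standard mollification/approximation of the solution, using that $u$ stays uniformly away from zero so that the regularized nonlinearities converge with controlled constants.
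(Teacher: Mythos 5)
Your overall scheme is the paper's: ILP-type weighted energy identities in a frame moving left with arbitrary speed $v>0$, induction on the regularity level, the positive local-smoothing term $\tfrac32\int(\pa_x^{s+j+1}u)^2\chi'\,dx$ as the engine, the $\chi'$ and $\chi'''$ error terms controlled at the base level by $\pa_x^{s+1}u\in L^{\infty}(\R;L^2([0,T]))$ and afterwards by the previous level's spacetime bound \eqref{thm:a16}, and the lower bound $\Jbr{x}^m|u(x,t)|\ge\l/2$ converting each negative power $|u|^{\a-p}$ into an admissible weight $\Jbr{x}^{m(p-\a)}$. The minor differences (you impose $|\chi''|+|\chi'''|\lesssim\chi'$ where the paper instead uses $\Jbr{x}^m\chi'_{\epsilon,b}\le c\,\chi_{\epsilon/2,b}$ and the inductive smoothing bound) are immaterial.

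There is, however, a genuine gap at the decisive step. What you defer as ``the main obstacle'' --- checking that the available weighted information dominates the factors $\Jbr{x}^{m(n-\a)}$ generated by the general Leibniz terms --- is precisely the mathematical content of the proof, and ``I would dominate them by already-controlled quantities'' is the assertion to be proved, not a routine omission. The difficulty is that the solution carries the weight $\Jbr{x}^m$ only on derivatives of order at most $4$ (from \eqref{thm:12}), while the localized high-order norms $\pa_x^{s+r}u\,\sqrt{\chi_{\epsilon,b}}\in L^2$ supplied by the induction hypothesis carry no weight at all; so for a generic term $\int|u|^{\a-n}\,\pa_x^{\beta_1}u\cdots\pa_x^{\beta_n}u\,\pa_x^{\beta_0+1}u\,\pa_x^{s+r+1}u\,\chi\,dx$ it is not evident that the weight budget closes. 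The paper closes it by interpolating each factor $\pa_x^{\beta_i}u$ between $\Jbr{x}^m\pa_x^{k_i}u\in L^2$ ($k_i\le4$) and the localized $\pa_x^{s+r}u$, writing $\beta_i=(1-\theta_i)(s+r)+\theta_i k_i-1$, and then verifying the arithmetic inequality $\theta_0+\theta_1+\cdots+\theta_n\ge n-\tfrac{2}{r+3+2/\a}>n-\a$ in \eqref{A21}--\eqref{A24}; this last step uses $n\ge2$ and, crucially, the hypothesis $s\ge 2m+4\ge 2/\a+4$, and would fail for smaller $s$. Your proposal contains neither the interpolation mechanism nor this count, so as written the estimate of the Leibniz remainder does not close.
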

\vskip.1in  
\begin{remark}  In the proof of Theorem \ref{thm:1} we shall only consider the integral equation version of the IVP \eqref{gkdv} and estimates 
which hold for complex and real valued solutions.  In the case of Theorem \ref{thm:2} the proof is based on weighted energy estimates performed in the differential equations for which we need to have real-valued solutions. 
\end{remark}

\vskip.2in

Below we shall use the notation:
\begin{equation*}
\txLebnT{F}{\I}{2}=\sup_{t\in\R}\|F(t)\|_2,\;\;\;\xtLebnT{F}{\I}{2}=\sup_{x\in\R}\,(\int_{-\infty}^{\infty}|f(x,t)|^2dt)^{1/2},
\end{equation*}
with
$\txLebn{F}{\I}{2}$ and $\txLebn{F}{\I}{2}$ denoting the corresponding norms restricted to the time interval $[0,T]$.
\vskip.1in

In section 2 we shall state the necessary estimates for the proofs of Theorems \ref{thm:1} and \ref{thm:2}., which will be given in section 3. 
\section{Preliminaries}

We start this section presenting some linear estimates. The first one is concerning the sharp  (homogeneous)  version of Kato smoothing effect found in \cite{KPV}.
\begin{lemma} 
Let $\{U(t)\,:\,t\in\R\}=\{e^{-it\pa_x^3}\,:\;t\in \R\}$ denote the unitary group describing the solution of the associated linear problem to \eqref{gkdv}. Then, for any  
$f\in L^2(\R)\,$ complex or real valued
\begin{align}
	\txLebnT{U(t)f}{\I}{2}  +  \xtLebnT{\pa_x U(t)f}{\I}{2} &{} = \Big(1+\frac{1}{\sqrt{3}}\Big) \Lebn{f}{2}. \label{lest:1}
\end{align}
\end{lemma}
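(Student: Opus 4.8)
The plan is to treat the two terms on the left-hand side separately, since they are of genuinely different nature and, together, produce an \emph{identity} rather than a mere bound. For the first term, observe that $U(t)$ acts on the Fourier side as multiplication by the unimodular multiplier $e^{it\xi^3}$ (the Fourier symbol of $U(t)$). Hence Plancherel's theorem gives $\Lebn{U(t)f}{2}=\Lebn{f}{2}$ for every fixed $t\in\R$, and taking the supremum over $t$ yields $\txLebnT{U(t)f}{\I}{2}=\Lebn{f}{2}$ exactly, with no loss.

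The substance of the lemma is the claim that the Kato smoothing term contributes precisely $\frac{1}{\sqrt3}\Lebn{f}{2}$. First I would reduce to $f\in\mathcal{S}(\R)$ by density and write, for fixed $x\in\R$,
\begin{equation*}
\pa_x U(t)f(x)=\frac{1}{2\pi}\int_{-\infty}^{\infty} i\xi\, e^{ix\xi}\,e^{it\xi^3}\,\widehat f(\xi)\,d\xi .
\end{equation*}
The point is to read this as a Fourier transform in the time variable. Since $\xi\mapsto\xi^3$ is a smooth increasing bijection of $\R$, I would perform the change of variables $\tau=\xi^3$, with $d\tau=3\xi^2\,d\xi$, obtaining
\begin{equation*}
\pa_x U(t)f(x)=\frac{1}{2\pi}\int_{-\infty}^{\infty} e^{it\tau}\,
\frac{i\,e^{ix\xi(\tau)}\,\widehat f(\xi(\tau))}{3\,\xi(\tau)}\,d\tau ,
\end{equation*}
where $\xi(\tau)$ denotes the real cube root of $\tau$.

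Now the decisive step is Plancherel's theorem in $t$: the time $L^2$ norm of $\pa_x U(t)f(x)$ equals, up to the normalizing constant, the $L^2_\tau$ norm of the bracketed amplitude. Since $|e^{ix\xi(\tau)}|=1$, the dependence on $x$ disappears, which is exactly what forces the identity and the $x$-independence of the supremum. Changing variables back to $\xi$, the Jacobian $3\xi^2$ cancels the factor $\xi^{-2}$ in the squared amplitude, leaving
\begin{equation*}
\int_{-\infty}^{\infty}|\pa_x U(t)f(x)|^2\,dt
=\frac{1}{2\pi}\int_{-\infty}^{\infty}\frac{|\widehat f(\xi)|^2}{9\xi^2}\,3\xi^2\,d\xi
=\frac13\,\Lebn{f}{2}^2 ,
\end{equation*}
uniformly in $x$. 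Hence $\xtLebnT{\pa_x U(t)f}{\I}{2}=\frac{1}{\sqrt3}\Lebn{f}{2}$, and adding the two contributions produces the stated constant $1+\frac{1}{\sqrt3}$.

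I would expect the main obstacle to be one of rigor rather than idea: justifying the interchange of the $\xi$-integral with the $L^2_t$ pairing and the applicability of Plancherel in $t$ at a fixed $x$. This is why I would first argue for Schwartz data, where all integrals converge absolutely and the amplitude is smooth away from the stationary frequency $\xi=0$, and then pass to general $f\in L^2(\R)$ by density, using that both extremes of the identity depend continuously on $f$. Some care is also needed near $\xi=0$, where the substitution $\tau=\xi^3$ is singular; the cancellation of the two powers of $\xi$ shows the singularity is harmless, but it should be tracked to confirm that the constant is exactly $1/\sqrt3$.
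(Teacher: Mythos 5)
Your argument is correct and the constant comes out right: unitarity of $U(t)$ on $L^2$ gives the first term exactly, and the substitution $\tau=\xi^3$ followed by Plancherel in $t$ (with the Jacobian $3\xi^2$ cancelling the $\xi^{-2}$ in the squared amplitude) gives $\frac{1}{\sqrt3}\Lebn{f}{2}$ for the second, uniformly in $x$. The paper states this lemma without proof, citing \cite{KPV}, and your computation is precisely the standard argument given there, so there is nothing to add beyond the technical care near $\xi=0$ and the density step that you already flag.
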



\vskip.1in

Next, we collect some estimates necessary to prove the main results. 

\begin{lemma} \label{lem:0}
Let $\mu>0,\;r\in\Z^+ $.  Then for any ${\md \theta} \in[0,1]$ with $(1-\theta)r\in\Z^+$
\begin{align}
	\Lebn{\Jbr{x}^{\theta \mu} \pa_x^{(1-\theta)r} f}{2} \le{}& C\Lebn{\Jbr{x}^{\mu}  f}{2}^{\theta} \Lebn{\pa_x^{r} f}{2}^{1-\theta} +L.O.T. \label{kest:1} 
	\end{align}
\end{lemma}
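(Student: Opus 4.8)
The plan is to establish first the clean Bessel-potential version
\[
\Lebn{\Jbr{x}^{\theta\mu} J^{(1-\theta)r} f}{2} \le C \Lebn{\Jbr{x}^{\mu} f}{2}^{\theta}\, \Lebn{J^{r} f}{2}^{1-\theta},\qquad J:=\Jbr{D}=(1-\pa_x^2)^{1/2},
\]
and then to transfer it to the stated inequality for the genuine derivative $\pa_x^{(1-\theta)r}$. The hypothesis $(1-\theta)r\in\Z^+$ (with $r\in\Z^+$) is used precisely at this last step: writing $k:=(1-\theta)r$, the operators $J^{k}$ and $\pa_x^{k}$, and likewise $J^{r}$ and $\pa_x^{r}$, differ only by operators of strictly lower order (for instance $J^{2}=1-\pa_x^{2}$ turns $J^{k}$ into $\pm\pa_x^{k}$ plus lower-order differential operators). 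Since the weight $\Jbr{x}^{\theta\mu}$ is a multiplication operator it commutes with these discrepancies up to smoothing remainders, so replacing $J$ by $\pa_x$ produces exactly the terms absorbed into $L.O.T.$, while on the right one uses $\Lebn{J^{r} f}{2}\le\Lebn{\pa_x^{r} f}{2}+\Lebn{f}{2}$. I emphasize that it is essential to interpolate with the Bessel potential $J=\Jbr{D}$ and not the homogeneous operator $|D|$ or $\pa_x$: the symbol $\langle\xi\rangle^{(1-z)r}$ has modulus $\langle\xi\rangle^{(1-\Re z)r}$ on the vertical lines of the strip, whereas $(i\xi)^{(1-z)r}$ would grow like $e^{\pi r|\Im z|/2}$ and break the interpolation once $r\ge 2$.

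For the clean inequality I would apply Hadamard's three-lines theorem to the analytic family $T_z:=\Jbr{x}^{z\mu}J^{(1-z)r}$ on $0\le\Re z\le 1$. Fix $f\in\mathcal S(\R)$ with $A:=\Lebn{\Jbr{x}^{\mu}f}{2}>0$ and $B:=\Lebn{J^{r}f}{2}>0$, fix $g\in L^2(\R)$ with $\Lebn{g}{2}\le 1$, and set
\[
\Phi(z) := e^{\eps(z^2-1)}\, A^{-z} B^{-(1-z)} \int_{\R} \big(\Jbr{x}^{z\mu} J^{(1-z)r} f\big)(x)\, \overline{g(x)}\, dx ,
\]
which is analytic and bounded on the strip. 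On $\Re z=0$ the factors $\Jbr{x}^{iy\mu}$ and $J^{-iyr}$ are unimodular, hence unitary on $L^2$, so $\Lebn{T_{iy}f}{2}=\Lebn{J^{r}f}{2}=B$ and $|\Phi(iy)|\le 1$. On $\Re z=1$ one has $\Lebn{T_{1+iy}f}{2}=\Lebn{\Jbr{x}^{\mu}J^{-iyr}f}{2}$, and the key estimate below bounds this by $C(1+|y|)^{N}A$, giving $|\Phi(1+iy)|\le C(1+|y|)^{N}$. The polynomial growth is harmless: it is absorbed by the Gaussian factor, and letting $\eps\downarrow 0$ the three-lines theorem yields $|\Phi(\theta)|\le C$. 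Taking the supremum over $g$ gives $\Lebn{T_\theta f}{2}\le C A^{\theta}B^{1-\theta}$, which is the clean inequality; the endpoints $\theta=0,1$ are trivial and density extends it to admissible $f$.

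The genuine technical content, and the main obstacle, is the boundary estimate on $\Re z=1$, namely the weighted control of the imaginary powers of $J$,
\[
\Lebn{\Jbr{x}^{\mu} J^{-iyr} f}{2} \le C\,(1+|y|)^{N}\, \Lebn{\Jbr{x}^{\mu} f}{2},
\]
that is, boundedness on $L^2(\Jbr{x}^{2\mu}\,dx)$ with operator norm growing only polynomially in $y$. I would obtain it from the commutator structure of $\Jbr{x}^{\mu}J^{-iyr}\Jbr{x}^{-\mu}$: commuting one power of $\Jbr{x}$ past $J^{-iyr}$ produces $[\Jbr{x},J^{-iyr}]$, whose symbol is essentially $\pa_\xi\langle\xi\rangle^{-iyr}$ times a bounded factor, hence of order $-1$ in $\xi$ and of size $O(|y|)$, so it is $L^2$-bounded with norm $\lesssim(1+|y|)$. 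For $\mu\in\Z^+$ this commuting is iterated a finite and elementary number of times, each step lowering the surviving weight and costing one factor $(1+|y|)$, leaving $L^2$-bounded operators and the advertised polynomial growth; for general $\mu>0$ the same conclusion follows from the pseudodifferential calculus. Once this weighted bound is in hand, the three-lines mechanics above are routine, so this commutator estimate is where I expect the real work to lie.
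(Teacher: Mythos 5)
Your argument is correct in substance, but it takes a genuinely different and much heavier route than the paper's. The paper proves Lemma \ref{lem:0} by ``successive integration by parts'': since $r$ and $(1-\theta)r$ are integers one writes $\Lebn{\Jbr{x}^{\theta\mu}\pa_x^{(1-\theta)r}f}{2}^2$ as an integral, shifts derivatives between the two factors by parts, and splits the resulting products with Cauchy--Schwarz and Young's inequality into the powers $\theta$ and $1-\theta$; each time a derivative lands on the weight one produces exactly the admissible remainder with $\mu-1$, $r-1$. The lower-order allowance in the statement exists precisely so that this entirely elementary real-variable argument closes. What you prove instead is the sharp, remainder-free inequality for the Bessel potential $J$ --- which is precisely the second estimate of \cite[Lemma 4]{NP} that the paper quotes right after Lemma \ref{lem:0} as a related result --- and you then transfer it to $\pa_x^{(1-\theta)r}$, generating the lower-order terms only at that stage. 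Your route buys a cleaner and more general statement (no integrality is needed for the $J$-version), but at the cost of the three-lines theorem plus the weighted bound $\Lebn{\Jbr{x}^{\mu}J^{-iyr}f}{2}\le C(1+|y|)^N\Lebn{\Jbr{x}^{\mu}f}{2}$, which you rightly identify as the real work. Two points there deserve more care than a wave at the pseudodifferential calculus: for non-integer $\mu$ the weight $\Jbr{x}^{2\mu}$ is not $A_2$ once $\mu\ge 1/2$, and the cleanest way to pass from your integer-$\mu$ commutator iteration to general $\mu>0$ is Stein--Weiss interpolation with change of measure between consecutive integer weights; and for odd $k=(1-\theta)r$ the discrepancy between $J^{k}$ and $\pa_x^{k}$ is a lower-order \emph{pseudodifferential} operator rather than a differential one, so the transfer step is better phrased through the bounded multiplier $\pa_x^{k}J^{-k}$ and its commutator with $\Jbr{x}^{\theta\mu}$ (whose output must then be re-estimated to match the specific form of the stated lower-order terms). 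Neither issue is a genuine gap, but the paper's one-line integration-by-parts proof avoids all of this machinery.
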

where the lower order terms $\,L.O.T.$ in \eqref{kest:1} are bounded by
\begin{align*}
\sum_{0\leq\beta\leq 1,\;(1-\beta)(r-1)\in\Z^+}\;\;\;\Lebn{\Jbr{x}^{\beta(\mu-1)} \pa_x^{(1-{\md \b})(r-1)} f}{2} . 
\end{align*}
\vskip.15in

\begin{proof}
The proof of this estimate follows by successive integration by parts.
\end{proof}

The inequality \eqref{kest:1} is related with the following estimates found in \cite{NP}.

\begin{lemma}[{\cite[Lemma 4]{NP}}] 
For any $a$, $b>0$ and $\ga \in (0,1)$, there exists $C>0$ such that
\begin{align*}
	\Lebn{J^{\ga a}(\Jbr{x}^{(1-\ga)b}f)}{2} \le C\Lebn{\Jbr{x}^b f}{2}^{1-\ga}\Lebn{J^a f}{2}^{\ga}, \\ 
	\Lebn{\Jbr{x}^{\ga a}(J^{(1-\ga)b}f)}{2} \le C\Lebn{J^b f}{2}^{1-\ga}\Lebn{\Jbr{x}^a f}{2}^{\ga}. 
\end{align*}
\end{lemma}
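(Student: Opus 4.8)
The two inequalities are classical consequences of Stein's complex interpolation theorem, and I would prove them by the Hadamard three-lines method applied to a suitable analytic family of operators; by density it suffices to treat $f\in\Sch(\R)$. For the first inequality I would consider, on the closed strip $\{z:0\le\Re z\le 1\}$, the analytic family
\[
T_z f := J^{za}\,\Jbr{x}^{(1-z)b} f,
\]
where $J^s=(1-\pa_x^2)^{s/2}$. This family is analytic in $z$ and admissibly bounded on $\Sch(\R)$, with $T_0 f=\Jbr{x}^b f$ and $T_1 f=J^a f$, so the stated estimate at $\ga$ will follow once the two endpoint bounds are established.

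On the line $\Re z=0$ the argument is immediate: since $J^{iya}$ is a Fourier multiplier of modulus one and $\Jbr{x}^{-iyb}$ has modulus one pointwise, both are isometries on $L^2(\R)$, whence
\[
\Lebn{T_{iy}f}{2}=\Lebn{J^{iya}\Jbr{x}^{-iyb}\big(\Jbr{x}^{b}f\big)}{2}=\Lebn{\Jbr{x}^{b}f}{2}.
\]
On the line $\Re z=1$ one must bound $\Lebn{T_{1+iy}f}{2}=\Lebn{J^{(1+iy)a}\Jbr{x}^{-iyb}f}{2}$ by $C(1+|y|)^{N}\Lebn{J^a f}{2}$, with at most polynomial growth in $y$. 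Writing $T_{1+iy}f=J^{iya}\big(J^{a}\Jbr{x}^{-iyb}J^{-a}\big)J^{a}f$ and using again that $J^{iya}$ is an $L^2$-isometry, this reduces to showing that the conjugated multiplier $J^{a}\Jbr{x}^{-iyb}J^{-a}$ is a zeroth-order operator bounded on $L^2(\R)$ with operator norm growing at most polynomially in $y$. This is the main obstacle: it is a pseudodifferential estimate, to be handled either by the Calderón–Vaillancourt theorem after tracking how the symbol seminorms of $\Jbr{x}^{-iyb}$ depend on the parameter $y$, or equivalently by estimating the commutator $[J^{a},\Jbr{x}^{-iyb}]$, which is of order $a-1$ and thus produces exactly the lower-order contributions. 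Granting this, Stein's theorem (the three-lines lemma applied to $z\mapsto\langle T_z f,h\rangle$ for fixed $h$ with $\Lebn{h}{2}=1$, the polynomial growth being admissible) yields $\Lebn{T_\ga f}{2}\le C\,\Lebn{\Jbr{x}^b f}{2}^{1-\ga}\Lebn{J^a f}{2}^{\ga}$, which is the first inequality.

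The second inequality is the companion obtained by interchanging the roles of $J$ and $\Jbr{x}$: I would run the identical scheme with the analytic family $\widetilde T_z f:=\Jbr{x}^{za}J^{(1-z)b}f$, for which $\widetilde T_0 f=J^b f$ and $\widetilde T_1 f=\Jbr{x}^a f$. The endpoint $\Re z=0$ is again trivial by isometry, while the endpoint $\Re z=1$ requires the analogous boundedness on $L^2(\R)$, with polynomial growth in $y$, of the conjugated operator $\Jbr{x}^{a}J^{-iyb}\Jbr{x}^{-a}$; this is the same type of symbol-calculus estimate as before and constitutes the only genuine difficulty. Once both endpoint families are controlled, Stein interpolation delivers the two inequalities simultaneously.
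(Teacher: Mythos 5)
The paper offers no proof of this lemma---it is quoted verbatim from \cite{NP}, where it is established by exactly the Stein/three-lines argument you describe, so your proposal matches the source's approach. Your treatment of the two endpoints is correct: $\Re z=0$ is an isometry, and the crux at $\Re z=1$ reduces, as you say, to the bound $\Lebn{J^{a}(\Jbr{x}^{-iyb}g)}{2}\le C(1+|y|)^{N}\Lebn{J^{a}g}{2}$, which holds because every derivative of $\Jbr{x}^{-iyb}$ is bounded with a constant growing polynomially in $y$, so multiplication by it is bounded on $H^{a}$ with admissible growth.
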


\vskip.1in

\section{Proof of the main results. }

\begin{proof}[Proof of Theorem \ref{thm:1}]
To simplify the exposition and without lost of generality we shall consider real valued functions. Let us introduce the complete metric space
\begin{align*}
	X_T = {}&\left\{ u \in C([0,T]; H^s(\R)); \right.
	 \vertiii{u}_{X_T} := \norm{u}_{L^{\I}_T H^{s}_x} + \norm{\Jbr{x}^{m} u}_{L^{\I}_T L^\I_x} \\
	&{} \qquad + \sum_{l=1}^4 \norm{\Jbr{x}^{m} \pa^l_x u}_{L^{\I}_T L^2_x} + \xtLebn{\pa_x^{s+1} u}{\I}{2} \le 5C_1 \d, \\
	{}& \qquad \left. \sup_{0 \le t \le T} \Lebn{\Jbr{x}^m (u(t)-u_0)}{\I} \le \frac{\l}2 \right\}
\end{align*}
{\md equipped with the distance function 
\[
d_{X_T}(u,v) ={} \vertiii{u-v}_{X_T}
\]
}for any $s \ge 2m+4$ with $s \in \Z^+$,  $m= \left[ \frac{1}{\a} \right] +1$. 
Notice that we have
\begin{align}
	\frac{\l}{2} \le \Jbr{x}^m |u(x,t)| \le \Jbr{x}^m |u_0(x)| + \frac{\l}{2} \label{cond:1}
\end{align}
for any $(x,t) \in \R \times [0,T] $ as long as $u \in X_T$.	
Here the constant $C_1$  will be chosen later.
Set 
\begin{align}
	\P(u(t)) = U(t)u_0 \mp \int_0^t U(t-s)(|u|^{\a}\pa_x u)(s) ds. \label{ieq:1}
\end{align}
We will prove that $\P$ is a contraction map in $X_T$. 
Let us first show that $\P$ maps from $X_T$ to itself. Recall that 
\[
	\Sobn{f}{s} \approx \xLebn{\partial_x^s f}{2} + \xLebn{f}{2}.
\]
By using \eqref{lest:1} and {\md the} Leibniz rule, one has
\begin{align}
	\begin{aligned}
	&{}\txLebn{\pa_x^{s} \P(u)}{\I}{2} + \xtLebn{\pa_x^{s+1} \P(u)}{\I}{2} \\
	\le {}&  C_0 \Lebn{\pa_x^s u_0}{2} +c\,\|\partial_x^s (|u|^{\a} \pa_x u)\|_{L_T^1L_x^2} \\
	\le {}&  C_0 \Lebn{\pa_x^s u_0}{2} + c\,\sum_{j=0}^{s} \txLebn{\pa_x^j (|u|^{\a}) \pa_x^{s+1-j} u}{1}{2} \\
	=: {}& 2C_0 \Lebn{\pa_x^s u_0}{2} + c\,\sum_{j=0}^{s} A_j.
	\end{aligned}
	\label{est:00}
\end{align}

We shall consider $A_j$. 
A use of the H\"older inequality gives us
\begin{align*}
	A_0 = {}& \txLebn{|u|^{\a} \pa_x^{s+1} u}{1}{2} \\
	\le {}& CT^{1/2} \xtLebn{\pa_x^{s+1}u}{\I}{2} \txLebn{\Jbr{x} |u|^{\a}}{\I}{\I} \Lebn{\Jbr{x}^{-1}}{2} \\
	\le {}& CT^{1/2} \xtLebn{\pa_x^{s+1}u}{\I}{2} \txLebn{\Jbr{x}^{m} u}{\I}{\I}^{\a},
\end{align*}
which yields
\begin{align}
	A_0 \le CT^{1/2} \d^{\a+1}. \label{est:0}
\end{align}
{\md The estimates for the intermediate terms $A_j$ ($2 \le j \le s-1$) can be obtained by the interpolation between the terms in $A_0$ and $A_s$. Hence,} we shall consider $A_s$.
One sees that
\begin{align*}
	A_s ={}& \txLebn{\pa_x^s (|u|^{\a}) \pa_x u}{1}{2} \\
	\le {}& CT \( \txLebn{|u|^{\a-s}|\pa_x u|^{s} \pa_x u}{\I}{2} + \cdots + \txLebn{|u|^{\a-1}|\pa_x^s u| \pa_x u}{\I}{2} \) \\
	=:{}& CT \( A_{s,s} + \cdots + A_{s,1} \).
\end{align*}
Since the middle term $A_{s,j}$ ($2 \le j \le s-1$) can be estimated by the interpolation between $A_{s,1}$ and $A_{s,s}$, it suffices to estimate $A_{s,1}$ and $A_{s,s}$. 

Using that
\begin{equation}\label{lowbound}
\langle x\rangle^{m}\geq c\,\lambda\,|u(x,t)|^{-1}
\end{equation}
and  {\md  Sobolev embedding}  we deduce that
\begin{align*}
	A_{s,1} = {}& \txLebn{|u|^{\a-1}|\pa_x^s u| \pa_x u}{\I}{2} \\
	\le {}& C\txLebn{|u|^{\a-1}|\pa_x u|}{\I}{\I} \txLebn{\pa_x^s u}{\I}{2} \\
	\le {}& C\txLebn{\langle x\rangle^{m(1-\a)}|\pa_x u|}{\I}{\I} \txLebn{\pa_x^s u}{\I}{2} \\
	\le {}& C\txLebn{\Jbr{x}^{m-1}\pa_x u}{\I}{\I} \txLebn{\pa_x^s u}{\I}{2} \\
	\le {}& C\( \txLebn{\Jbr{x}^{m}\pa_x u}{\I}{2} + \txLebn{\Jbr{x}^{m}\pa_x^2 u}{\I}{2} \) \txLebn{\pa_x^s u}{\I}{2}.
\end{align*} 

By using \eqref{lowbound}  and {\md  Sobolev embedding} again, one has 
\begin{align*}
	A_{s,s} = {}& \txLebn{|u|^{\a-s}|\pa_x u|^{s} \pa_x u}{\I}{2} \\
	\le {}& C\txLebn{\Jbr{x}^{m(s-\a)}|\pa_x u|^{s} \pa_x u}{\I}{2} \\
	\le {}& C\txLebn{\Jbr{x}^{m} \pa_x u}{\I}{\I}^{s} \txLebn{\pa_x u }{\I}{2}  \\
	\le {}& C\( \txLebn{\Jbr{x}^{m} \pa_x u}{\I}{2}^{s} + \txLebn{\Jbr{x}^{m} \pa_x^2 u}{\I}{2}^{s} \) \txLebn{\pa_x u }{\I}{2}.
\end{align*}
Therefore it holds that
\begin{align}
	\begin{aligned}
	A_s \le{}& CT \( \txLebn{\Jbr{x}^{m}\pa_x u}{\I}{2} + \txLebn{\Jbr{x}^{m}\pa_x^2 u}{\I}{2} \) \txLebn{\pa_x^s u}{\I}{2} \\
	&{}+  CT \( \txLebn{\Jbr{x}^{m} \pa_x u}{\I}{2}^{s} + \txLebn{\Jbr{x}^{m} \pa_x^2 u}{\I}{2}^{s} \) \txLebn{\pa_x u }{\I}{2} \\
	\le {}& CT(\d^2 + \d^{s+1}). 
	\end{aligned}
	\label{est:1}
\end{align} 

Combining \eqref{est:00} with \eqref{est:0} and \eqref{est:1}, we obtain
\begin{align}
	\begin{aligned}
	&{}\txLebn{\pa_x^{s} \P(u)}{\I}{2} + \xtLebn{\pa_x^{s+1} \P(u)}{\I}{2} \\
	\le{}& 2C_0 \d + CT^{1/2}\d^{\a+1} + CT\d (\d + \d^s). 
	\end{aligned}
	\label{est:4}
\end{align} 
One also sees from Sobolev embedding that
\begin{align}
	\begin{aligned}
	\txLebn{\P(u)}{\I}{2} \le{}& \xLebn{u_0}{2} + CT \txLebn{u}{\I}{\I}^{\a} \txLebn{\pa_x u}{\I}{2} \\
	\le {}& \xLebn{u_0}{2} + CT \norm{u}_{L^{\I}_T H^1_x}^{\a+1} \\
	\le {}& \d + CT\d^{\a+1}. 
	\end{aligned}
	\label{est:2}
\end{align}

Let us next consider
\begin{align*}
	\norm{\Jbr{x}^{m} \pa^l_x \P(u)}_{L^{\I}_T L^2_x}
\end{align*}
for any $l \in [1,4]$. Note that
\[
	U(-t)xU(t)f = xf - 3t \pa_x^2 f.
\]
This implies
\[
x^{j}U(t)f = U(t)(x-3t \pa_x^2)^{j} f
\]
for any $j \in \Z^+$. Therefore, by interpolation, we have
\begin{align*}
	{}\Lebn{\Jbr{x}^j U(t)f}{2} 
	\le{}& C\(\Lebn{U(t)f}{2} + \Lebn{|x|^j U(t)f}{2} \) \\
	\le{}& C\Lebn{f}{2} + C\Lebn{(x+3it \pa_x^2)^{j}f}{2} \\
	\le{}& C\Lebn{f}{2} + C \Lebn{x^{j}f}{2} +  C\,t^{j} \Lebn{\pa_x^{2j} f}{2},
\end{align*}
which yields
\begin{align}
	\Lebn{\Jbr{x}^j U(t)f}{2} \le C\Lebn{\Jbr{x}^{j}f}{2} +  C\,t^{j} \Lebn{\pa_x^{2j} f}{2} \label{evo:1}
\end{align}
for any $j \in \Z^{+}$. Thus, from \eqref{evo:1}, we deduce that
\begin{equation*}
\begin{split}
&\txLebn{\Jbr{x}^{m} \pa^l_x \P(u)}{\I}{2}\\
&\le \txLebn{\Jbr{x}^{m} U(t) \pa^l_x u_0}{\I}{2} + \txLebn{\int_0^t \Jbr{x}^{m} U(t-s) \pa^l_x \( |u|^{\a}\pa_x u\)(s) ds}{\I}{2} \\
	&\le C \Lebn{\Jbr{x}^m \pa_x^l u_0}{2} + C T^{m}\Lebn{\pa_x^{2m+l}u_0}{2} \\
	&\hskip10pt+ CT \txLebn{\Jbr{x}^m \pa_x^l (|u|^{\a}\pa_x u)}{\I}{2} + CT^{m+1} \txLebn{\pa_x^{2m+l} (|u|^{\a}\pa_x u)}{\I}{2}. 
\end{split}
\end{equation*}
Since $s \ge 2m+4$, as in the proof of \eqref{est:0} and \eqref{est:1}, the interpolation argument gives us
\begin{equation}\label{est:in}
\begin{split}
\|\pa_x^{2m+l} (|u|^{\a}\pa_x u)\|_{L^{\infty}_TL^2_x}	\le{}& C \txLebn{|u|^{\a}\pa_x u}{\I}{2} + C\txLebn{\pa_x^{s} (|u|^{\a}\pa_x u)}{\I}{2} \\
	\le{}& C\norm{u}^{\a+1}_{L^{\I}_T H^1_x} +CA_0 +CA_s \\
	\le {}& C\d^{\a+1} + C(\d^2 + \d^{s+1}).
\end{split}
\end{equation}
	
By means of interpolation once more, we deduce that
\begin{align*}
	&{}\txLebn{\Jbr{x}^m \pa_x^l (|u|^{\a}\pa_x u)}{\I}{2} \\
	\le{}& C\txLebn{\Jbr{x}^m |u|^{\a} \pa_x^{l+1} u}{\I}{2} + C\txLebn{\Jbr{x}^m \pa_x^{l}(|u|^{\a}) \pa_x u}{\I}{2} \\
	\le{}& C\txLebn{\Jbr{x}^m |u|^{\a}\pa_x^{l+1} u}{\I}{2} \\
	&{} + C\txLebn{\Jbr{x}^m |u|^{\a-l} |\pa_x u|^{l+1}}{\I}{2} + C\txLebn{\Jbr{x}^m |u|^{\a-1} |\pa_x^l u| \pa_x u}{\I}{2} \\
	=: {}& \textrm{I}_1 + \textrm{I}_2 +\textrm{I}_3.
\end{align*}
Firstly, $\textrm{I}_1$ is estimated as
\begin{align*}
	\textrm{I}_1 \le {}& C\txLebn{\Jbr{x}^m u}{\I}{\I} \txLebn{|u|^{\a-1}\pa_x^{l+1}u}{\I}{2} \\
	\le{}& C\txLebn{\Jbr{x}^m u}{\I}{\I} \txLebn{\Jbr{x}^{m-1} \pa_x^{l+1}u}{\I}{2} \\
	\le{}& C\d^2.
\end{align*}
Note that when $l=4$, we work with \eqref{kest:1}.
Further, one sees from {\md  Sobolev embedding} and $m(l-\a) < ml$ that 
\begin{align*}
	\textrm{I}_2 \le{}& C\txLebn{\Jbr{x}^{m(l-\a)} |\pa_x u|^l}{\I}{\I} \txLebn{|u|^{m} \pa_x u}{\I}{2} \\
	\le{}& C\txLebn{\Jbr{x}^m \pa_x u}{\I}{\I}^{l} \txLebn{\Jbr{x}^{m} \pa_x u}{\I}{2} \\
	\le{}& C \( \txLebn{\Jbr{x}^m \pa_x u}{\I}{2} + \txLebn{\Jbr{x}^m \pa_x^2 u}{\I}{2} \)^{l} \txLebn{\Jbr{x}^{m} \pa_x u}{\I}{2} \\
	\le{}& C\d^{l+1}.
\end{align*}
On the other hand, by using {\md  Sobolev embedding} again, we obtain
\begin{align*}
	\textrm{I}_3 \le{}& C \txLebn{\Jbr{x}^{m-1} \pa_x u }{\I}{\I} \txLebn{\Jbr{x}^{m} \pa_x^{l} u}{\I}{2} \\
	\le{}& \( \txLebn{\Jbr{x}^m \pa_x u}{\I}{2} + \txLebn{\Jbr{x}^m \pa_x^2 u}{\I}{2} \) \txLebn{\Jbr{x}^{m} \pa_x^{l} u}{\I}{2} \\
	\le{}& C\d^2.
\end{align*} 
Combining these estimates, it holds that
\begin{align*}
	\txLebn{\Jbr{x}^m \pa_x^l (|u|^{\a}\pa_x u)}{\I}{2} \le C\d^2 + C\d^{l+1}.
\end{align*}
Hence, we obtain
\begin{align}
	\norm{\Jbr{x}^{m} \pa^l_x \P(u)}_{L^{\I}_T L^2_x} \le C \d + CT\d (1 + \d^{s}) \label{est:3}
\end{align}
as long as $T \le 1$.

Next we estimate
\[
	\txLebn{\Jbr{x}^m \P(u)}{\I}{\I}.
\]
By using \eqref{evo:1} and the fact
\[
	\frac{d}{dt} U(t)u_0 = -\pa_x^3 U(t) u_0,
\]
together with Sobolev embedding, we obtain
\begin{align*}
	\le {}&\xLebn{\Jbr{x}^m (U(t)u_0 -u_0)}{\I}\\
	\le {}& \xLebn{\int_0^t \frac{d}{ds}\(\Jbr{x}^m U(s)u_0 \) ds}{\I} \\
	\le {}& \xLebn{\int_0^t \Jbr{x}^m U(s) \pa_x^3 u_0 ds}{\I} \\
	\le {}& CT \( \xLebn{\Jbr{x}^{m} \pa_x^3 u_0}{2} + \xLebn{\Jbr{x}^{m} \pa_x^4 u_0}{2} \) + CT^{m+1} \Sobn{u_0}{2m+4},
\end{align*}
which implies
\begin{align}
	\txLebn{\Jbr{x}^m (U(t)u_0 -u_0)}{\I}{\I} \le CT\d. \label{est:5}
\end{align}
if $T \le 1$.
From \eqref{evo:1} it follows that
\begin{align*}
	&{}\xLebn{\Jbr{x}^m \int_0^t U(t-s) \( |u|^{\a}\pa_x u\)(s) ds}{\I} \\
	\le{}& C\xLebn{\Jbr{x}^{m} \int_0^t U(t-s) \( |u|^{\a}\pa_x u\)(s) ds}{2} \\
	&{}+ C\xLebn{\Jbr{x}^m \int_0^t U(t-s) \pa_x \( |u|^{\a}\pa_x u\)(s) ds}{2} \\ 
	\le {}& C \int_0^t \xLebn{\Jbr{x}^{m} \( |u|^{\a}\pa_x u\)(s)}{2} ds \\
	&{}+ C \int_0^t |t-s|^{m} \xLebn{ \pa_x^{2m} \( |u|^{\a}\pa_x u\)(s)}{2} ds \\
	&{}+ C \int_0^t \xLebn{\Jbr{x}^{m} \pa_x \( |u|^{\a}\pa_x u\)(s)}{2} ds \\
	&{}+ C \int_0^t |t-s|^{m} \xLebn{ \pa_x^{2m+1} \( |u|^{\a}\pa_x u\)(s)}{2} ds \\
	\le {}& CT \( \txLebn{\Jbr{x}^{m} |u|^{\a}\pa_x u}{\I}{2} + \txLebn{\Jbr{x}^{m} \pa_x \(|u|^{\a}\pa_x u\)}{\I}{2} \) \\
	&{} + CT^{m} \( \txLebn{ \pa_x^{2m} \(|u|^{\a}\pa_x u\)}{\I}{2} + T \txLebn{ \pa_x^{2m+1} \(|u|^{\a}\pa_x u\)}{\I}{2} \). 
\end{align*}
Also, {\md Sobolev embedding} provides that
\begin{align*}
	&{} \txLebn{\Jbr{x}^{m} \pa_x (|u|^{\a}\pa_x u)}{\I}{2} \\
	\le{}& C\txLebn{\Jbr{x}^{m} |u|^{\a-1}(\pa_x u)^2}{\I}{2}+  \txLebn{\Jbr{x}^{m}|u|^{\a}\pa_x^2 u}{\I}{2} \\
	\le{}& C\txLebn{\Jbr{x}^{2m-\a m} (\pa_x u)^2}{\I}{2}+  \txLebn{\Jbr{x}^{m}|u|^{\a}\pa_x^2 u}{\I}{2} \\
	\le{}& C\txLebn{\Jbr{x}^{m} \pa_x u}{\I}{\I} \txLebn{\Jbr{x}^{m} \pa_x u}{\I}{2}+  C\|u\|^{\a}_{L^{\infty}_TL^{\infty}_x}\txLebn{\Jbr{x}^{m}\pa_x^2 u}{\I}{2} \\
	\le{}& C(\delta^{1+{\md \a}}+\d^2).
\end{align*}
Further, a similar computation as in \eqref{est:in} gives us 
\begin{align*}
	\txLebn{\pa_x^{\b}(|u|^{\a} \pa_x u)}{\I}{2} \le C\d^{\a+1} + C(\d^2 + \d^{s+1}) \end{align*}
for some $\b \in [0,s]$. Therefore it follows from these estimates that
\begin{align}
	\begin{aligned}
	&{}\txLebn{\Jbr{x}^m \int_0^t U(t-s) \( |u|^{\a}\pa_x u\)(s) ds}{\I}{\I} \\
	\le{}& CT\d^{2} + CT^{m+1} (\d^{\a+1} + \d^{s+1}). 
	\end{aligned}
	\label{est:6}
\end{align}
Combining \eqref{est:5} with \eqref{est:6}, we see that
\begin{align}
	\begin{aligned}
	\txLebn{\Jbr{x}^m \P(u)}{\I}{\I} 
	\le{}& \xLebn{\Jbr{x}^{m}u_0}{\I} + \txLebn{\Jbr{x}^m\( U(t)u_0-u_0\)}{\I}{\I} \\
	&{}+ \txLebn{\Jbr{x}^m \int_0^t U(t-s)\(|u|^{\a}\pa_x u\)(s) ds}{\I}{\I} \\
	\le{}& \d + CT(\d+\d^{s+1})
	\end{aligned}
	\label{est:7}
\end{align}
whenever $T \le 1$.
Therefore, by using \eqref{est:4} and \eqref{est:2} together with \eqref{est:3} and \eqref{est:7}, one establishes that
\begin{align*}
	\vertiii{u}_{X_T} 
\le{}& 2C_0 \d + CT^{1/2}\d^{\a+1} + CT\d (\d + \d^s) + \d + CT\d^{\a+1} \\
	&{}+ C \d + CT\d (1 + \d^{s}) + \d + CT(\d+\d^{s+1}) \\
	\le{}& 4C_1\d + CT^{1/2}\d^{\a+1} + CT\d (1 + \d^{s}) \\
	\le{}& 5C_1\d,
\end{align*}
where $C_1 = \max(2C_0, C, 1)$ as long as $T = T(\a, \d, s)$ is small enough.

Moreover, as in the proof of \eqref{est:7}, it holds that
\begin{align*}
	&{}\sup_{0 \le t \le T} \Lebn{\Jbr{x}^m (\P(u(t))-u_0)}{\I} \\
	\le{}& \xLebn{\Jbr{x}\( U(t)u_0-u_0\)}{\I} + \xLebn{\Jbr{x}^m \int_0^t U(t-s)\(|u|^{\a}\pa_x u\)(s) ds}{\I} \\
	\le {}& CT(\d + \d^{1+s}) \le \frac{\l}2 
\end{align*}
if $T = T(\d, s, \l)$ is sufficiently small. Thus, $\P(u) \in X_T$ holds.

Let us show $\P$ is a contraction map in $X_T$. By using \eqref{lest:1}, we first estimate
\begin{align}
	\begin{aligned}
	&{}\txLebn{\pa_x^{s} \( \P(u)-\P(v)\)}{\I}{2} + \xtLebn{\pa_x^{s+1} \( \P(u)-\P(v)\)}{\I}{2} \\
	\le {}& \sum_{j=0}^{s} \txLebn{\pa_x^j (|u|^{\a}) \pa_x^{s+1-j} u - \pa_x^j (|v|^{\a}) \pa_x^{s+1-j} v}{1}{2} \\
	=: {}& \sum_{j=0}^{s} B_j.
	\end{aligned}
	\label{est:d1}
\end{align}
Similarly to the above, by the interpolation argument, it suffices to deal with $B_0$ and $B_s$. Here we observe that
\begin{equation}\label{est:d2}
	||u|^{\a} - |v|^{\a}|=\alpha (\theta\,|u|+ (1-\theta)|v|)^{\a-1})(|u|-|v|) \leq c\,\langle x\rangle^{m(1-\a)}|u-v|
\end{equation}
for $\t\in (0,1)$ and any $\a \in \R$. Together with \eqref{est:d2}, a similar computation as in \eqref{est:0} shows that 
\begin{equation*}\label{est:d2a}
\begin{split}
	B_0 =& \txLebn{|u|^{\a}\pa_x^{s+1}u - |v|^{\a}\pa_x^{s+1}v}{1}{2} \\
	\le{}& C \txLebn{|u|^{\a} \pa_x^{s+1}(u -v)}{1}{2} + C\txLebn{(|u|^{\a-1} + |u|^{\a-1})|u-v|\pa_x^{s+1}v}{1}{2} \\
	\le{}& CT^{1/2} \txLebn{\Jbr{x}^m u}{\I}{\I} \xLebn{\Jbr{x}^{-1}}{2} \xtLebn{\pa_x^{s+1}(u-v)}{\I}{2} \\
	&+ CT^{1/2} \txLebn{\Jbr{x}^m (u-v)}{\I}{\I} \xLebn{\Jbr{x}^{-1}}{2} \xtLebn{\pa_x^{s+1}v}{\I}{2}. 
\end{split}
\end{equation*}
This implies
\begin{align}
	B_0 \le CT^{1/2}\d d_{X_T}(u,v). \label{est:d3}
\end{align}
On the other hand, we see from \eqref{est:d1} that
\begin{align*}
	B_s \le{}& CT \txLebn{|u|^{\a-s}(\pa_x u)^{s+1} - |v|^{\a-s}(\pa_x v)^{s+1}}{\I}{2} \\
	&{} + \dots \\
	&{} + CT \txLebn{|u|^{\a-1}\pa_x^s u \pa_x u - |v|^{\a-1} \pa_x^s v  \pa_x v}{\I}{2} \\
	=:{}& CT(B_{s,1} + \dots + B_{s,s}). 
\end{align*}
Similarly to \eqref{est:1}, the middle terms $B_{s,j}$ ($2 \le j \le s-1$) can be estimated by the interpolation between $B_{s,1}$ and $B_{s,s}$, so it suffices to 
estimate $B_{s,1}$ and $B_{s,s}$. 
By using {\md  Sobolev embedding} and \eqref{est:d2}, one has that
\begin{align*}
	B_{s,1} \le{}& \txLebn{|u|^{\a-s} ( (\pa_x u)^{s+1} - (\pa_x v)^{s+1})}{\I}{2}\\
& + \txLebn{( |u|^{\a-s} - |u|^{\a-s}) (\pa_x v)^{s+1}}{\I}{2} \\
	\le{}& \( \txLebn{\Jbr{x}^{m} \pa_x u}{\I}{2}^{s} + \txLebn{\Jbr{x}^{m} \pa_x v}{\I}{2}^{s} \) \txLebn{\pa_x (u-v) }{\I}{2} \\
	&{}+ \( \txLebn{\Jbr{x}^{m} \pa_x v}{\I}{2}^{s} + \txLebn{\Jbr{x}^{m} \pa_x^2 v}{\I}{2}^{s} \)\\
& \;\times \txLebn{\pa_x v}{\I}{2} \txLebn{\Jbr{x}^m (u-v)}{\I}{\I} \\
	\le{}& \d^{s} (1+\d) d_{X_T}(u,v).
\end{align*}
We also obtain that
\begin{align*}
	B_{s,s} ={}& \txLebn{|u|^{\a-1}\pa_x^s u \pa_x u - |v|^{\a-1} \pa_x^s v  \pa_x v}{\I}{2} \\
	\le{}& \txLebn{|u|^{\a-1} \pa_x^s u \pa_x (u-v) }{\I}{2} + \txLebn{|u|^{\a-1} \pa_x^s (u-v) \pa_x v}{\I}{2} \\
	&{}+ \txLebn{(|u|^{\a-1} - |v|^{\a-1}) \pa_x^s v  \pa_x v}{\I}{2} \\
	=:{}& E_1 + E_2 + E_3.
\end{align*}
To estimate $E_1$ and $E_2$ we follow an argument similar to the one used in \eqref{est:1}, so that one obtains
\begin{align*}
	E_1 +E_2 \le{}& C \( \txLebn{\Jbr{x}^{m}\pa_x (u-v)}{\I}{2} + \txLebn{\Jbr{x}^{m}\pa_x^2 (u-v)}{\I}{2} \) \txLebn{\pa_x^s u}{\I}{2} \\
	&{}+  C \( \txLebn{\Jbr{x}^{m} \pa_x u}{\I}{2} + \txLebn{\Jbr{x}^{m} \pa_x^2 u}{\I}{2} \) \txLebn{\pa_x^s (u-v) }{\I}{2} \\
	\le{}& C\d d_{X_T}(u,v). 
\end{align*}
From {\md Sobolev embedding} and \eqref{est:d2}, $E_3$ is estimated as
\begin{align*}
	E_3 ={}& \txLebn{(|u|^{\a-1} - |v|^{\a-1}) \pa_x^s v  \pa_x v}{\I}{2} \\
	\le{}& C\txLebn{(|u|^{\a-2} + |v|^{\a-2})|u-v| \pa_x^s v  \pa_x v}{\I}{2} \\
	\le{}& C\txLebn{\Jbr{x}^{2m-1} |u-v| \pa_x^s v  \pa_x v}{\I}{2} \\
	\le{}& C\txLebn{\Jbr{x}^{m} (u-v) }{\I}{\I} \txLebn{\pa_x^s v}{\I}{2} \\
	&{} \; \times \( \txLebn{\Jbr{x}^{m} \pa_x v}{\I}{2} + \txLebn{\Jbr{x}^{m} \pa_x^2 v}{\I}{2}\) \\
	\le{}& C\d^2 d_{X_T}(u,v).
\end{align*}
Thus, since
\begin{align*}
	B_{s,s} \le E_1 + E_2 + E_3 \le C\d(1+\d) d_{X_T}(u,v),
\end{align*}
we have
\begin{align}
	B_s \le CT\d(1+\d^s) d_{X_T}(u,v). \label{est:d4}
\end{align}
Combining \eqref{est:d1} with \eqref{est:d3} and \eqref{est:d4}, it holds that
\begin{align}
	\begin{aligned}
	&{}\txLebn{\pa_x^{s} \( \P(u)-\P(v)\)}{\I}{2} + \xtLebn{\pa_x^{s+1} \( \P(u)-\P(v)\)}{\I}{2} \\
	\le {}& CT^{1/2}\d d_{X_T}(u,v) + CT\d(1+\d^s) d_{X_T}(u,v).
	\end{aligned}
	\label{est:d5}
\end{align}
Moreover, by using \eqref{est:d2} and  Sobolev embedding, together with \eqref{lowbound}, we deduce that
\begin{equation*}
\begin{split}
	&{}\txLebn{\P(u) -\P(v)}{\I}{2} \\
	 \le{}& CT\txLebn{|u|^{\a} \pa_x (u-v)}{\I}{2} + CT \txLebn{(|u|^{\a} - |v|^{\a}) \pa_x v}{\I}{2} \\
	\le{}& CT\norm{u}_{L^{\I}_T H^1_x}^{\a} \txLebn{\pa_x (u-v)}{\I}{2} + CT \txLebn{(|u|^{\a-1} + |v|^{\a-1})|u-v| \pa_x v}{\I}{2} \\
	\le{}& CT\norm{u}_{L^{\I}_T H^1_x}^{\a} \txLebn{\pa_x (u-v)}{\I}{2} + CT \txLebn{\Jbr{x}^m |u-v|}{\I}{\I} \txLebn{\pa_x v}{\I}{2},
\end{split}
\end{equation*}
which yields
\begin{align}
	\txLebn{\P(u) -\P(v)}{\I}{2} \le CT(\d+\d^{\a})d_{X_T}(u,v). \label{est:d6}
\end{align}

Let us estimate 
\[
	\txLebn{\Jbr{x}^m \( \P(u)-\P(v)\)}{\I}{\I}.
\]
Arguing as in \eqref{est:6}, by means of the Sobolev embedding and \eqref{evo:1},
we first compute
\begin{align}
	\begin{aligned}
	&{}\txLebn{\Jbr{x}^m (\P(u)-\P(v))}{\I}{\I} \\
	\le{}& C\txLebn{\int_0^t \Jbr{x}^{m} U(t-s)(|u|^{\a}\pa_x u - |v|^{\a}\pa_x v)(s) ds}{\I}{2} \\
	&{}+ C\txLebn{\int_0^t \Jbr{x}^m U(t-s) \pa_x (|u|^{\a}\pa_x u - |v|^{\a}\pa_x v)(s) ds}{\I}{2} \\
	\le{}& C \txLebn{\Jbr{x}^m (|u|^{\a}\pa_x u - |v|^{\a}\pa_x v)}{1}{2} \\
	&{}+ C \txLebn{\Jbr{x}^m \pa_x (|u|^{\a}\pa_x u - |v|^{\a}\pa_x v)}{1}{2} \\
	&{}+ CT^{m} \txLebn{\pa_x^{2m} (|u|^{\a}\pa_x u - |v|^{\a}\pa_x v)}{1}{2} \\
	&{}+ CT^m \txLebn{\pa_x^{2m+1} (|u|^{\a}\pa_x u - |v|^{\a}\pa_x v)}{1}{2} \\
	=:{}& J_1 + J_2 +J_3 +J_4.
	\end{aligned}
	\label{est:d7}
\end{align}
Using interpolation the terms $J_3$ and $J_4$ can be handled applying the same argument as in \eqref{est:d5} and \eqref{est:d6}. Thus
\begin{align}
	J_3 + J_4 \le{}& CT^{1/2}\d d_{X_T}(u,v) + CT\d(1+\d^s) d_{X_T}(u,v) \label{est:d8}
\end{align}
whenever $T \le 1$.
Further, it holds that
\begin{align*}
	J_1 \le{}& 
	CT \txLebn{\Jbr{x}^m u}{\I}{\I} \txLebn{\Jbr{x}^{m-1} \pa_x (u-v)}{\I}{2} \\
	&{}+ CT \txLebn{\Jbr{x}^m (u-v)}{\I}{\I} \txLebn{\Jbr{x}^{m-1} \pa_x v}{\I}{2},
\end{align*}
which implies that
\begin{align}
	J_1 \le CT\d d_{X_T}(u,v). \label{est:d9}
\end{align}
Next, we estimate $J_2$, indeed, 
\begin{align*}
	J_2 \le{}& C \txLebn{\Jbr{x}^m  (|u|^{\a-1}(\pa_x u)^2 - |v|^{\a-1}(\pa_x v)^2)}{1}{2} \\
	&{}+ C \txLebn{\Jbr{x}^m (|u|^{\a}\pa_x^2 u - |v|^{\a}\pa_x^2 v)}{1}{2} \\
	=:{}& J_{2,1} +J_{2,2}.
\end{align*}
Hence, it comes from \eqref{est:d2} and {\md  Sobolev embedding} that
\begin{align*}
	J_{2,1} \le{}& C \txLebn{\Jbr{x}^m  |u|^{\a-1}((\pa_x u)^2 -(\pa_x v)^2)}{1}{2} \\
	&{}+ C \txLebn{\Jbr{x}^m  (|u|^{\a-1}-|v|^{\a-1}) (\pa_x v)^2}{1}{2} \\
	\le{}& CT\d(1+\d)d_{X_T}(u,v).
\end{align*}
On the other hand, we obtain employing \eqref{est:d2} once again and the argument leading to \eqref{est:d3} that
\begin{align*}
	J_{2,2} \le{}& C \txLebn{\Jbr{x}^m (|u|^{\a} - |v|^{\a}) \pa_x^2 v}{1}{2}  + C\txLebn{\Jbr{x}^m |u|^{\a} \pa_x^2 (u-v)}{1}{2} \\
	\le{}& CT\d d_{X_T}(u,v).
\end{align*}
Hence one establishes that
\begin{align}
	J_{2} \le CT\d(1+\d)d_{X_T}(u,v). \label{est:d10}
\end{align}
Thus, collecting \eqref{est:d7}, \eqref{est:d8}, \eqref{est:d9} and \eqref{est:d10}, it follows that
\begin{align}
	\begin{aligned}
	&{}\txLebn{\Jbr{x}^m (\P(u)-\P(v))}{\I}{\I} \\
	\le{}& CT^{1/2}\d d_{X_T}(u,v) + CT\d(1+\d^s) d_{X_T}(u,v)
	\end{aligned}
	\label{est:d11}
\end{align}
as long as $T \le 1$.
Finally, we turn to consider
\[
	\txLebn{\Jbr{x}^m \pa_x^l (\P(u)-\P(v))}{\I}{2}
\]
for any $l \in [1,4]$. By using \eqref{evo:1}  one has
\begin{align}
	\begin{aligned}
	&{}\txLebn{\Jbr{x}^m \pa_x^l (\P(u)-\P(v))}{\I}{2} \\
	\le{}& C\txLebn{\Jbr{x}^m \pa_x^l (|u|^{\a}\pa_x u - |v|^{\a}\pa_x v)}{1}{2} \\
	&{}+ CT^{m} \txLebn{\pa_x^{2m+l} (|u|^{\a}\pa_x u - |v|^{\a}\pa_x v)}{1}{2} \\
	=:{}& F_1 +F_2.
	\end{aligned}
	\label{est:d12}
\end{align}
$F_2$ can be estimated following the argument in \eqref{est:d8}. Hence we have
\begin{align}
	F_2 \le{}& CT^{1/2+m}\d d_{X_T}(u,v) + CT^{m+1}\d(1+\d^s) d_{X_T}(u,v) \label{est:d13}.
\end{align}
Let us estimate $F_1$. A use of the triangle inequality tells us that
\begin{align*}
	F_1 \le{}& C\txLebn{\Jbr{x}^m ( |u|^{\a-l}(\pa_x u)^{l+1} - |v|^{\a-l}(\pa_x v)^{l+1} )}{1}{2} \\
	&{}+ C\txLebn{\Jbr{x}^m ( |u|^{\a-1}\pa_x^{l} u \pa_x u - |v|^{\a-1}\pa_x^{l} v \pa_x v)}{1}{2} \\
	&{}+ C\txLebn{\Jbr{x}^m ( |u|^{\a} \pa_x^{l+1} u - |v|^{\a} \pa_x^{l+1} v)}{1}{2} \\
	=:&{} F_{1,1} +F_{1,2} +F_{1,3}.
\end{align*}
It comes from \eqref{cond:1}, \eqref{est:d2} and {\md  Sobolev embedding} that
\begin{align*}
	F_{1,1} \le{}& C \txLebn{\Jbr{x}^m  |u|^{\a-l}((\pa_x u)^{l+1} -(\pa_x v)^{l+1})}{1}{2} \\
	&{}+ C \txLebn{\Jbr{x}^m  (|u|^{\a-l}-|v|^{\a-l}) (\pa_x v)^{l+1}}{1}{2} \\
	\le{}& CT\d^l (1+\d)d_{X_T}(u,v).
\end{align*}
On the other hand, combining \eqref{cond:1} and \eqref{est:d2}, we obtain
\begin{align*}
	F_{1,3} \le{}& C \txLebn{\Jbr{x}^m (|u|^{\a} - |v|^{\a}) \pa_x^{l+1} v}{1}{2}  + C\txLebn{\Jbr{x}^m |u|^{\a} \pa_x^{l+1} (u-v)}{1}{2} \\
	\le{}& CT\d d_{X_T}(u,v).
\end{align*}
Observe that if $l=4$ we use \eqref{kest:1} to estimate $F_{1,3}$. 
As for $F_{1,2}$, one sees from \eqref{cond:1} and \eqref{est:d2} that
\begin{align*}
	F_{1,2} 
	\le{}& CT\txLebn{\Jbr{x}^m |u|^{\a-1} \pa_x^{l} u \pa_x (u-v) }{\I}{2} \\
	&{}+ CT\txLebn{\Jbr{x}^m |u|^{\a-1} \pa_x^{l} (u-v) \pa_x v}{\I}{2} \\
	&{}+ CT\txLebn{\Jbr{x}^m (|u|^{\a-2} - |v|^{\a-2})|u-v| \pa_x^{l} v  \pa_x v}{\I}{2} \\
	\le{}& CT\d d_{X_T}(u,v).
\end{align*}
Combining these estimates, it holds that
\begin{align}
	F_1 \le CT\d d_{X_T}(u,v) + CT\d^l (1+\d)d_{X_T}(u,v). \label{est:d14}
\end{align}
Therefore, by unifying \eqref{est:d12}, \eqref{est:d13} and \eqref{est:d14}, we establish
\begin{align}
	\begin{aligned}
	&{}\txLebn{\Jbr{x}^m \pa_x^l (\P(u)-\P(v))}{\I}{2} \\
\le{}& CT^{1/2}\d d_{X_T}(u,v) + CT\d(1+\d^s) d_{X_T}(u,v)
	\end{aligned}
	\label{est:d15}
\end{align}
if $T \le 1$.
In conclusion, combining \eqref{est:d5} with \eqref{est:d6}, \eqref{est:d11} and \eqref{est:d15}, we see that
\begin{align*}
	d_{X_T}(\P(u),\P(v)) \le \frac12 d_{X_T}(u, v)
\end{align*}
as long as $T = T(\d, s)$ is sufficiently small. This tells us $\P$ is a contraction map in $X_T$, that is, \eqref{gkdv} has a unique local solution in $X_T$. The reminder of the proof is standard, so we omit the detail. This completes the proof.
\end{proof}
\begin{proof}[Proof of Theorem \ref{thm:2}]

Without loss of generality we shall assume $\,x_0=0$. First, we introduce a two parameter family of cut-off functions
$\,\chi_{\epsilon, b}$ : for any $\epsilon>0, \,b\geq 5\epsilon$
\begin{equation*}
\chi_{\epsilon,b}(x)=
\begin{cases}
\begin{aligned}
& 0,\;\;\;\;x<\epsilon,\\
&1,\;\;\;\;x>b-\epsilon,
\end{aligned}
\end{cases}
\end{equation*}
with
\begin{equation*}
\begin{aligned}
\begin{cases}
&\chi'_{\epsilon,b}(x)\geq 0,\;\;\;\supp(\chi_{\epsilon,b})\subseteq [\epsilon,\infty),\;\;\;\supp(\chi'_{\epsilon,b})\subseteq [\epsilon,b-\epsilon],\\
\\
&\chi'_{\epsilon,b}(x)\geq \frac{1}{b-4\epsilon},\;x\in[2\epsilon,b-2\epsilon],\\
\\
&\chi_{\epsilon/2,b}(x)\geq c_{\epsilon,b}(\chi_{\epsilon,b}(x)+\chi'_{\epsilon,b}(x)),\;\;\;\;\;x\in\R.
\end{cases}
\end{aligned}
\end{equation*}
\vskip.1in
By formally taking the $s+j,$ ($j=1,...,l$)  derivative of the equation in \eqref{gkdv}, multiplying the result by
$\,\partial_x^{s+j}u(x,t)\,\chi_{\epsilon,b}(x+vt)$ for arbitrary $\epsilon>0, v>0$ and $\,b\geq 5\epsilon$ and integrating the result in the space variable, after some integration by parts, it follows that
\begin{equation}\label{A3}
\begin{aligned}
&\frac12\frac{d}{dt}\int (\partial_x^{s+j} u)^2(x,t)\chi_{\epsilon,b}(x+vt)\,dx\\
&-\underset{A_1}{ \underbrace{{\md \frac{v}{2}} \int  (\partial_x^{s+j} u)^2(x,t)\chi_{\epsilon,b}'(x+vt)\,dx}}\\
&+\underset{A_2}{\underbrace{\frac32\int  (\partial_x^{s+j+1} u)^2(x,t)\chi_{\epsilon,b}'(x+vt)\,dx}}
\\
&-\underset{A_3}{ \underbrace{\frac12\int  (\partial_x^{s+j}u)^2(x,t)\chi_{\epsilon,b}'''(x+vt)\,dx}}\\
&{\md \pm} \underset{A_4}{ \underbrace{\int \partial_x^{s+j}(|u|^{\alpha}\partial_x u)\partial_x^{s+j}u(x,t)\,\chi_{\epsilon,b}(x+vt)\,dx}}=0.
\end{aligned}
\end{equation}
{\md Note that the above formal computation is justified by arguing as in \cite[Section 3]{ILP}.}  
The idea is to use the formula \eqref{A3} and induction argument in $ l\in\Z^+$ to establish \eqref{thm:a15} and \eqref{thm:a16}.
\vskip.1in
\underline {Case: $l=1$}: We observe that the term $A_2$ in \eqref{A3} is positive. Also, after integration in the time interval $[0,T]${\md,} the terms 
$A_1$ and $A_3$ are bounded by using the second statement in \eqref{thm:13}.  Hence, one just needs to consider the contributions of the term 
$A_4$ in \eqref{A3}.

 Thus, we write
 \begin{equation}
 \begin{aligned}
 \label{A4}
 \partial_x^{s+1}(|u|^{\alpha}\partial_xu)=&|u|^{\alpha}\partial_x^{s+2}u+2\alpha |u|^{\alpha-1} \partial_xu\partial^{s+1}_xu+\dots\\
 &+c_{\alpha,s}|u|^{\alpha-(s+1)}(\partial_xu)^{s+1}\partial_xu.
 \end{aligned}
 \end{equation}
 
 By integration by parts{\md,} one sees that
 \begin{equation} \label{A5}
 \begin{split}
 \int |u|^{\alpha} &\partial_x^{s+2}u\partial_x^{s+1}u(x,t)\,\chi_{\epsilon,b}(x+vt)\,dx\\
=&
 -\frac{\alpha}{2} \int {\md |u|^{\alpha-2}u} \partial_xu(\partial_x^{s+1}u(x,t))^2\,\chi_{\epsilon,b}(x+vt)\,dx\\
 &\;-\frac{1}{2} \int|u|^{\alpha}(\partial_x^{s+1}u(x,t))^2\,\chi'_{\epsilon,b}(x+vt)\,dx={\md:} B_1+B_2.
 \end{split}
 \end{equation}
 Since for $x\in\R$ 
 \begin{equation}
  \label{A5a}
 |u|^{\alpha-1}|\partial_xu|(x,t)\leq c \|\Jbr{x}^{m(1-\alpha)}\partial_xu(t)\|_{\infty},
 \end{equation}
 \noindent combining the facts that after integration in the time interval $[0,T]${\md,}
 \begin{equation}
  \label{A5b}
 \int(\partial_x^{s+1}u(x,t))^2\,\chi'_{\epsilon,b}(x+vt)\,dx
\end{equation}
is bounded {\md with}
\begin{equation}
 \label{A5c}
\sup_{0\leq t \leq T}\|u(t)||_{\infty}<\infty,
\end{equation}
one can control the contributions of the terms $B_1$ and $B_2$ in \eqref{A5} for $l=1$. The argument to estimate the second term in the right hand side (r.h.d.) of \eqref{A4} is similar to that already done for $B_{\md 1}$ in \eqref{A5}. So it remains to consider the third term in the r.h.s. of \eqref{A4}. For this we write
\begin{equation}
\label{A6}
\begin{aligned}
&|\int |u|^{\alpha-(s+1)}(\partial_xu)^{s+1}\partial_xu\partial_x^{s+1}u(x,t)\,\chi_{\epsilon,b}(x+vt)\,dx|\\
&\leq c {\color{black} \int \Jbr{x}^{m(s+1-\alpha)}|\partial_x u|^{s+2} |\partial_x^{s+1}u(x,t)|\,|\chi_{\epsilon,b}(x+vt)|\,dx} \\
&\leq c \|  \Jbr{x}^{m}\partial_xu\|_{\infty}^{s+1} \|\partial_xu\|_2 (\int(\partial_x^{s+1}u(x,t))^2\,\chi_{\epsilon,b}(x+vt)\,dx)^{1/2},
\end{aligned}
\end{equation}
whose contribution can be bounded after using Gronwall's inequality in \eqref{A3}. 

This basically completes the proof of the case $l=1$ in the induction argument, i.e. \eqref{thm:a15} and \eqref{thm:a16}  with $j=l=1$. We remark that the terms omitted in \eqref{A4} (and in the proof of Theorem \ref{thm:1}) can be handled as they will be done in the next step, see \eqref{A13}-\eqref{A24}.
\vskip.1in
Now assuming the result \eqref{thm:a15} and \eqref{thm:a16}  for $l=r$ we shall prove it for $l=r+1$.

Thus, we consider the identity \eqref{A3} with $j=l+1$. As before the term $A_2$ is positive and the term $A_1$ and $A_3$ are bounded by the hypothesis of induction \eqref{thm:a16}  with  $l=r$ for  an appropriate value of $\epsilon'$ and $R$ there. Therefore, it remains to consider $A_4$ with $\,j=l+1$. 

First, we notice that by Theorem \ref{thm:1} one has
\begin{equation*} 
\Jbr{x}^m\,\partial_x^ku\in L^2(\R),\;\;\;k=1,\dots,4,
\end{equation*}
and by hypothesis of induction for any $\epsilon>0$ and $b\geq 5\epsilon$
\begin{equation}
\label{A8}
\partial_x^{s+j}u(x,t)\varphi_{\epsilon,b}(x+vt)\in L^2(\R),\;\;\;j=1,\dots, r,\;\;\;\varphi_{\epsilon,b}(x)=\sqrt{\chi_{\epsilon,b}(x)}.
\end{equation}
 Using that 
 \begin{equation}
 \label{A9}
 \Jbr{x}^m\,\chi_{\epsilon,b}'(x+vt)\leq c \chi_{\epsilon/2,b}(x+vt),\;\;\;\;\;\;c=c(m;v;t;\epsilon;b),
\end{equation}
successive integration by parts show that for any $\theta\in[0,1]$ with \newline $\theta k+(1-\theta)(s+r)\in\Z,\;k=1,2,3,4$
\begin{equation}
\label{A10}
\Jbr{x}^{\theta m}\partial_x^{\theta k+(1-\theta)(s+r)}u(x,t)\varphi_{\epsilon,b}(x+vt)\in L^2(\R),
\end{equation}
and by Sobolev embedding
\begin{equation}
\label{A11}
\Jbr{x}^{\theta m}\partial_x^{\theta k+(1-\theta)(s+r)-1}u(x,t)\chi_{\epsilon,b}(x+vt)\in L^{\infty}(\R),
\end{equation}
since the one already has the estimates for the lower order terms. 

Thus, we need to estimate the term \eqref{A4}  in \eqref{A3} 
\begin{equation}
\label{A12}
\int \partial_x^{s+r+1}(|u|^{\alpha}\partial_x u)\partial_x^{s+r+1}u(x,t)\,\chi_{\epsilon,b}(x+vt)\,dx,
\end{equation}
so, we write
 \begin{equation}
 \begin{aligned}
 \label{A13}
 \partial_x^{s+r+1}(|u|^{\alpha}\partial_xu) = &\, |u|^{\alpha}\partial_x^{s+r+2}u+2\alpha |u|^{\alpha-1} \partial_xu\partial^{s+r+1}_xu\\
 &+D_{s+r+1}.
 \end{aligned}
 \end{equation}
The argument to handle the contribution of the first two terms in the r.h.s. of \eqref{A13} is similar to that described in \eqref{A5}-\eqref{A6} so it will be omitted. Then it remains to consider the contribution of $D_{s+r+1}{\md=:}D$ in \eqref{A13} when it is inserted in the term $A_4$ in \eqref{A5} with $j=r+1$. We observe that $D$ is the sum of terms which are product of factors involving at derivatives of order at most $s+r$. In fact, one has that
\begin{equation}
\label{A15}
D=\sum_{n+\beta_0=s+r+1\atop 0\leq \beta_0\leq s+{\md r}-1}\;\sum_{\beta_1+\dots+\beta_n=n>1\atop 1\leq \beta_1,\dots,\beta_n\leq s+r}c_{\vec{\beta}}|u|^{\alpha-n}\,\partial_x^{\beta_1}u\dots\partial_x^{\beta_n}u\,\partial_x^{\beta_0+1}u.
\end{equation} 

When {\md $D$} is inserted in the term $A_4$ in \eqref{A3}{\md, this} yields an expression of the form
\begin{equation}
\label {A16}
\int |u|^{\alpha-n}\,\partial_x^{\beta_1}u\dots\partial_x^{\beta_n}u\,\partial_x^{\beta_0+1}u\partial_x^{s+r+1}u(x,t)\,\chi_{\epsilon,b}(x+vt)\,dx
\end{equation}
which are bounded in absolute value by
\begin{equation}
\label {A17}
\int \Jbr{x}^{m(n-\alpha)} {\md |} \partial_x^{\beta_1}u\dots\partial_x^{\beta_n}u\,\partial_x^{\beta_0+1}u\partial_x^{s+r+1}u(x,t){\md |}\,\chi_{\epsilon,b}(x+vt)\,dx.
\end{equation}
Consequently, it suffices to see that
\begin{equation}
\label{A18}
E_1= \|\Jbr{x}^{m(n-\alpha)}\,\partial_x^{\beta_1}u\dots\partial_x^{\beta_n}u\,\partial_x^{\beta_0+1}u\,\varphi(x+vt)\|_2
\end{equation}
is controlled by a product of the terms in \eqref{A10}-\eqref{A11} which have been already bounded in the previous {\md case} $ l=r$.
By an appropriate modification of the parameters in $\chi$, see \eqref{A8}-\eqref{A9}, one gets
\begin{equation}
\label{A19}
\begin{aligned}
E_1\leq 
\prod_{j=1}^n & \|\Jbr{x}^{m\theta_j}\,\partial_x^{(1-\theta_j)(s+r)+\theta_jk_j-1}u\,\tilde{\chi}\|_{\infty}\\
&{\md \times} \|\Jbr{x}^{m\theta_0}\,\partial_x^{(1-\theta_0)(s+r)+\theta_0k_j}u\,\tilde{\varphi}\|_{2}{\md,} \\
\end{aligned}
\end{equation}
where
\begin{equation}
\label{A20}
\begin{aligned}
&k_0,\;k_j\in\{1,\dots,4\},\;\;\;j=1,\dots, n,\\
&\beta_j=(1-\theta_j)(s+r)+\theta_jk_j-1,\;\;j=1,\dots,n,\\
& \beta_0=(1-\theta_0)(s+r)+\theta_0k_0-1.
\end{aligned}
\end{equation}

We shall complete the proof by establishing that
\begin{equation}
\label{A21}
\theta_0+\theta_1+\dots+\theta_n\geq n-\alpha.
\end{equation}
After some computations from \eqref{A20} one finds that
\begin{equation}
\label{A22}
{\md n+2}=(n-(\theta_0+\theta_1+\dots+\theta_n))(s+r)+\theta_0k_0+\theta_1k_1+\dots+\theta_nk_n,
\end{equation}
which implies that
\begin{equation}
\label{A23}
\begin{aligned}
\theta_0+\theta_1+\dots+\theta_n&=n+\frac{\theta_0k_0+\theta_1k_1+\dots+\theta_nk_n-{\md (n+2)}}{r+s}\\
&\geq n+\frac{\theta_0+\theta_1+\dots+\theta_n-{\md (n+2)}}{r+s},
\end{aligned}
\end{equation}
since $k_0,\;k_j\in\{1,\dots,4\}$ {\md and $j=1,\dots,n$.} Recalling that  $n>1\; (n\geq 2)$ {\md and $s \geq 2/\a +4$}, one gets the desired result
\begin{equation}
\label{A24}
\theta_0+\theta_1+\dots+\theta_n\geq\,\frac{n(s+r)-{\md (n+2)}}{s+r-1} \geq {\md n- \frac{2}{r+3+\frac2{\a}}} >  n-\alpha.
\end{equation}
\end{proof}

\subsection*{Acknowledgments} 
This work was done while H.M.
was visiting the Department of Mathematics of the University of California
at Santa Barbara whose hospitality he gratefully acknowledges.
H.M. was supported by the Overseas Research Fellowship Program by National Institute of Technology. F.L. was partially supported by CNPq and FAPERJ/Brazil.

\end{document}